\newtheorem{theorem}{Theorem}[section]
\theoremstyle{definition}
\newtheorem{definition}[theorem]{Definition}
\theoremstyle{remark}
\newtheorem{remark}[theorem]{Remark}
\numberwithin{equation}{section}
\begin{document}
\title[Extensions of beta and related functions...]{Certain new extensions of beta and related functions}

\author[Mohd Ghayasuddin]{Mohd Ghayasuddin}
\address[Mohd Ghayasuddin]{Department of Mathematics, Integral University Campus, Shahjahanpur-242001, India}
\email{ghayas.maths@gmail.com}

\author[Musharraf Ali]{Musharraf Ali}
\address[Musharraf Ali]{Department of Mathematics, G.F. College, Shahjahanpur-242001, India}
\email{drmusharrafali@gmail.com}

\author[R. B. Paris]{R. B. Paris $^*$}
\address[R. B. Paris]{Division of Computing and Mathematics, Abertay University, Dundee DD1 1HG, UK}
\email{r.paris@abertay.ac.uk}

%% If there are three of more authors they are added in the obvious way.

  \thanks{$^*$Corresponding author}

\vspace{.50cm}
\parindent=5mm
 \maketitle
%
%%% The following environment is needed for the abstract.
%%%

\begin{abstract}
In this paper, we introduce and investigate a new extension of the beta function by means of an integral operator involving a product of Bessel-Struve kernel functions. We also define a new extension of the well-known beta distribution, the Gauss hypergeometric function and the confluent hypergeometric function in terms of our extended beta function. In addition, some useful properties of these extended functions are also indicated in a systematic way.\\

\textbf{Keywords:} Beta function, extended beta function, Gauss hypergeometric
function, extended Gauss hypergeometric function, confluent hypergeometric function, extended confluent hypergeometric function, Bessel-Struve kernel function, extended beta distribution.\\
\textbf{MSC(2010):} 33B15, 33B20, 33C05, 33C15.
\end{abstract}

\vspace{.50cm}
\parindent=8mm
\section{\bf Introduction}
Throughout in this paper, let $\mathbb{N}$, $\mathbb{R}$ and $\mathbb{C}$ be the sets of natural numbers, real numbers and complex numbers, respectively, and let\\
$$\mathbb{N}:=\{1, 2, 3,...\},~{\mathbb{N}}_{0}:=\{0, 1, 2, 3,...\}=\mathbb{N}\cup \{0\}.$$

The classical beta function $B(\xi_{1},\xi_{2})$ is defined by (see \cite{8}, see also \cite{10})
\begin{equation} \label{1.1}
B(\xi_{1},\xi_{2})=\int_{0}^{1} y^{\xi_{1}-1}~(1-y)^{\xi_{2}-1} dy
\end{equation}
$$(\Re(\xi_{1})>0,~\Re(\xi_{2})>0).$$
In 1997, Chaudhry {\it et al.} \cite{1} introduced a very useful generalization of the classical beta function \eqref{1.1} by
\begin{equation} \label{1.2}
B_{p}(\xi_{1},\xi_{2})=\int_{0}^{1} y^{\xi_{1}-1}~(1-y)^{\xi_{2}-1}~\exp\left[-\frac{p}{y(1-y)}\right]dy
\end{equation}
$$(\Re(\xi_{1})>0,~\Re(\xi_{2})>0,~\Re(p)>0).$$
Obviously, for $p=0$, \eqref{1.2} reduces to \eqref{1.1}. The most interesting applications of \eqref{1.2} are given by Chaudhry {\it et al.} in \cite{2}. They generalized the classical Gauss and confluent hypergeometric functions by means of the extended beta function $B_{p}(\xi_{1},\xi_{2})$ as follows:
\begin{equation} \label{1.3}
 \aligned & F_{p}(\xi_{1}, \xi_{2}; \xi_{3}; x)=\sum_{n=0}^{\infty} \frac{(\xi_{1})_{n}~B_{p}(\xi_{2}+n, \xi_{3}-\xi_{2})}{B(\xi_{2}, \xi_{3}-\xi_{2})} \frac{x^{n}}{n!}\\
& \hskip 7mm (p\geq0,~|x|<1,~\Re(\xi_{3})>\Re(\xi_{2})>0)
 \endaligned
\end{equation}
and
\begin{equation} \label{1.4}
\aligned & \Phi_{p}(\xi_{2}; \xi_{3}; x)=\sum_{n=0}^{\infty} \frac{B_{p}(\xi_{2}+n, \xi_{3}-\xi_{2})}{B(\xi_{2},
\xi_{3}-\xi_{2})}~\frac{x^{n}}{n!}\\
& \hskip 7mm  (p\geq0,~\Re(\xi_{3})>\Re(\xi_{2})>0).
\endaligned
\end{equation}

Among the many interesting properties given in \cite{2}, the following integral representations are recalled:
\begin{equation} \label{1.5}
 F_{p}(\xi_{1}, \xi_{2}; \xi_{3}; x)
 =\frac{1}{B(\xi_{2}, \xi_{3}-\xi_{2})}
\end{equation}
\begin{equation*}\times \int_{0}^{1}
y^{\xi_{2}-1}~(1-y)^{\xi_{3}-\xi_{2}-1}~(1-xy)^{-\xi_{1}}  \exp\left[-\frac{p}{y(1-y)}\right]dy
\end{equation*}
$$(p\geq0,~|\arg(1-x)|<\pi,~\Re(\xi_{3})>\Re(\xi_{2})>0)$$
and
\begin{equation} \label{1.6}
\Phi_{p}(\xi_{2}; \xi_{3}; x)=\frac{1}{B(\xi_{2}, \xi_{3}-\xi_{2})}
\end{equation}
\begin{equation*} \times \int_{0}^{1}
y^{\xi_{2}-1}~(1-y)^{\xi_{3}-\xi_{2}-1}~\exp\left[xy-\frac{p}{y(1-y)}\right]dy
\end{equation*}
$$(p\geq0, ~\Re(\xi_{3})>\Re(\xi_{2})>0).$$
If we set $p=0$ in \eqref{1.5} and \eqref{1.6} then we easily recover the integral representations of the classical Gauss and confluent hypergeometric functions as follows (see \cite{8} and also \cite{10}):
\begin{equation} \label{1.5r}
 F(\xi_{1}, \xi_{2}; \xi_{3}; x)
 =\frac{1}{B(\xi_{2}, \xi_{3}-\xi_{2})} \int_{0}^{1}
y^{\xi_{2}-1}~(1-y)^{\xi_{3}-\xi_{2}-1}~(1-xy)^{-\xi_{1}}dy
\end{equation}
$$(|\arg(1-x)|<\pi,~\Re(\xi_{3})>\Re(\xi_{2})>0)$$
and
\begin{equation} \label{1.6r}
\Phi(\xi_{2}; \xi_{3}; x)=\frac{1}{B(\xi_{2}, \xi_{3}-\xi_{2})} \int_{0}^{1}
y^{\xi_{2}-1}~(1-y)^{\xi_{3}-\xi_{2}-1}~\exp(xy)dy
\end{equation}
$$(\Re(\xi_{3})>\Re(\xi_{2})>0).$$

By introducing an additional parameter $q$, Choi {\it et al.} \cite{3} defined a further extension of \eqref{1.2} as follows:
\begin{equation} \label{1.2a}
B_{p,q}(\xi_{1},\xi_{2})=\int_{0}^{1} y^{\xi_{1}-1}~(1-y)^{\xi_{2}-1}~\exp\left[-\frac{p}{y}-\frac{q}{(1-y)}\right]dy
\end{equation}
$$(\Re(\xi_{1})>0,~\Re(\xi_{2})>0,~\Re(p)>0,~\Re(q)>0).$$
The case $q=p$ in \eqref{1.2a}, yields the extended beta function given in \eqref{1.2}.\\

Since the beta function and its extensions play a crucial role in the study of special functions, a number of researchers have introduced and investigated several extensions of this important function (see, for example, \cite{m1}--\cite{1}, \cite{m3}, \cite{6}, \cite{7}, \cite{9}).\\

The Bessel-Struve kernel function $S_{\eta} (\lambda t), \lambda \in \mathbb{C}$ is the unique solution of the initial value problem
$L_{\eta}u(t) = \lambda^{2}u(t)$ subject to the initial conditions $ u(0) = 1$ and
$ u'(0) = \frac{\lambda \Gamma({\eta} + 1)}{\sqrt{\pi}~\Gamma({\eta} + \frac{3}{2})}$, where 
\[L_\eta=\frac{d^2u(t)}{dt^2}+\frac{2\eta+1}{t}\left(\frac{du(t)}{dt}-\frac{du(0)}{dt}\right)\]
is the Bessel-Struve differential operator. This function is given by (see \cite{4} and also \cite{5})
\begin{equation*}
S_{\eta}(\lambda t) = j_{\eta}(i\lambda t) - i h_{\eta}(i\lambda t),\qquad \forall~ t \in \mathbb{C},
\end{equation*}
where $j_{\eta}$ and $ h_{\eta} $ are the normalized Bessel and Struve functions. The series representation of the Bessel-Struve kernel
function is given as follows:
\begin{equation}\label{1.7}
S_{\eta} (t) =\frac{\Gamma({\eta} + 1)}{\sqrt{\pi}} \sum_{m=0}^{\infty} \frac{ t^{m} {\Gamma(\frac{m+1}{2})}}{~m!~\Gamma(\frac{m}{2} + \eta +1)}.
\end{equation}
Also, we have the following relations of the Bessel-Struve kernel function with the exponential function (see \cite{4} and also \cite{5}):
\begin{equation}\label{1.8}
S_{-\frac{1}{2}}(t) = e^{t}~{\rm and}~S_{\frac{1}{2}}(t) = \frac{e^{t} - 1}{t}.
\end{equation}

The main object of this paper is to introduce and investigate a new extension of the beta function by making use of the Bessel-Struve kernel function \eqref{1.7}. This is applied to extend the well-known beta distribution arising in statistical distribution theory. We also define a new class of Gauss and confluent hypergeometric functions in terms of our introduced beta function.

%%%
%%% Section 2
%%%
\section{\bf Extended beta function and its properties}
\parindent=8mm This section deals with a new extension of the beta function and its associated properties.

\begin{definition}
The new extended beta function $B_{\eta}^{p,q}(\xi_{1},\xi_{2})$ for $\Re(\eta)>-1$ is defined by
\begin{equation} \label{2.1}
B_{\eta}^{p,q}(\xi_{1},\xi_{2})=\int_{0}^{1}y^{\xi_{1}-1} (1-y)^{\xi_{2}-1} S_{\eta}\left[-\frac{p}{y}\right] S_{\eta}\left[-\frac{q}{1-y}\right] dy
\end{equation}
$$(\Re(\xi)_{1}>0,~\Re(\xi_{2})>0,~\Re(p)>0,~\Re(q)>0,~\Re(\eta)>-1)$$
where $S_{\eta}(t)$ denotes the Bessel-Struve kernel function given by \eqref{1.7}.
\end{definition}

\begin{remark}
We note that the case $\eta=-\frac{1}{2}$ in \eqref{2.1} yields the extended beta function defined by Choi {\it et al.} \cite{3}, which further for $q=p$ gives the known extension of the beta function given by Chaudhry {\it et al.} \cite{1}. Obviously, when $p=q=0$, \eqref{2.1} reduces to the classical beta function \eqref{1.1}.
\end{remark}

\begin{theorem}
The following integral representations for the extended beta function $B_{\eta}^{p,q}(\xi_{1},\xi_{2})$ hold true:
\begin{equation} \label{2.5}
B_{\eta}^{p,q}(\xi_{1},\xi_{2})=2 \int_{0}^{\frac{\pi}{2}} {\cos^{2\xi_{1}-1} t}~{\sin^{2\xi_{2}-1} t}~S_{\eta}(-p \sec^{2}t)~~S_{\eta}(-q \csc^{2}t) dt,
\end{equation}
\begin{equation} \label{2.5a}
B_{\eta}^{p,q}(\xi_{1},\xi_{2})=\int_{0}^{\infty} \frac{w^{\xi_{1}-1}}{(1+w)^{\xi_{1}+\xi_{2}}}~S_{\eta}\left[-\frac{p(1+w)}{w}\right]~S_{\eta}\left[-q(1+w)\right] dw,
\end{equation}
\begin{equation}\label{2.8}B_{\eta}^{p,q}(\xi_{1},\xi_{2})=2^{1-\xi_{1}-\xi_{2}} \int_{-1}^{1} (1+w)^{\xi_{1}-1}~(1-w)^{\xi_{2}-1}
\end{equation}
\begin{equation}\nonumber \times S_{\eta}\left[-\frac{2p}{1+w} \right]~S_{\eta}\left[-\frac{2q}{1-w} \right] dw,
\end{equation}
\begin{equation}\label{2.8a}
B_{\eta}^{p,q}(\xi_{1},\xi_{2})=(c-a)^{1-\xi_{1}-\xi_{2}}~\int_{a}^{c}~(w-a)^{\xi_{1}-1} (c-w)^{\xi_{2}-1}
\end{equation}
\begin{equation}\nonumber
\times S_{\eta}\left[-\frac{p(c-a)}{(w-a)} \right]~S_{\eta}\left[-\frac{q(c-a)}{(c-w}) \right] dw.
\end{equation}

\end{theorem}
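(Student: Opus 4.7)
The four integral representations are all obtained from the defining integral \eqref{2.1} by straightforward changes of variable, so the plan is simply to identify the correct substitution for each formula and verify that the Jacobian, the power factors, and the arguments of the two Bessel--Struve kernels all transform in the claimed way.

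First I would prove \eqref{2.5} by setting $y=\cos^{2}t$ in \eqref{2.1}. This gives $1-y=\sin^{2}t$ and $dy=-2\cos t\sin t\,dt$, with $t$ running from $\pi/2$ down to $0$. The factor $y^{\xi_{1}-1}(1-y)^{\xi_{2}-1}dy$ becomes $-2\cos^{2\xi_{1}-1}t\sin^{2\xi_{2}-1}t\,dt$, while $p/y=p\sec^{2}t$ and $q/(1-y)=q\csc^{2}t$; reversing the limits eliminates the sign and yields \eqref{2.5}. For \eqref{2.5a} I would use the standard substitution $y=w/(1+w)$, i.e.\ $w=y/(1-y)$, so that $1-y=1/(1+w)$, $dy=dw/(1+w)^{2}$, $p/y=p(1+w)/w$, and $q/(1-w)=q(1+w)$; collecting powers of $(1+w)$ gives the denominator $(1+w)^{\xi_{1}+\xi_{2}}$ as claimed.

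For \eqref{2.8} the appropriate substitution is the affine map $y=(1+w)/2$, sending $[0,1]$ to $[-1,1]$, with $1-y=(1-w)/2$ and $dy=dw/2$; the factors $(1/2)^{\xi_{1}-1}(1/2)^{\xi_{2}-1}(1/2)$ combine to give the prefactor $2^{1-\xi_{1}-\xi_{2}}$, and the kernel arguments become $-2p/(1+w)$ and $-2q/(1-w)$ as required. Finally \eqref{2.8a} is obtained from \eqref{2.1} (equivalently from \eqref{2.8}) via the affine map $y=(w-a)/(c-a)$, which sends $[0,1]$ to $[a,c]$, producing $1-y=(c-w)/(c-a)$, $dy=dw/(c-a)$, and $p/y$, $q/(1-y)$ of the stated forms; the power of $(c-a)$ accumulates to $(c-a)^{1-\xi_{1}-\xi_{2}}$.

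The computations are routine and the main thing to be careful about is bookkeeping of the exponents and the direction of the integration after each substitution; there is no analytic obstacle, since the convergence of each integral for $\Re(\xi_{1}),\Re(\xi_{2})>0$ and $\Re(p),\Re(q)>0$ follows from the convergence of \eqref{2.1} under these restrictions, together with the fact that the substitutions are smooth diffeomorphisms of the relevant intervals. The only slight subtlety is the behaviour of $S_{\eta}(-p\sec^{2}t)$, $S_{\eta}(-q\csc^{2}t)$ near the endpoints $t=0,\pi/2$ in \eqref{2.5} (and similarly for the other forms), but the exponential-type decay coming from the $-p/y$, $-q/(1-y)$ arguments through the series \eqref{1.7} more than compensates for the algebraic singularities, so each transformed integral is absolutely convergent and genuinely equals $B_{\eta}^{p,q}(\xi_{1},\xi_{2})$.
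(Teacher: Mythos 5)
Your proposal is correct and uses exactly the four substitutions the paper itself employs ($y=\cos^2 t$, $y=w/(1+w)$, $y=(1+w)/2$, $y=(w-a)/(c-a)$), merely carrying out the Jacobian and exponent bookkeeping that the paper leaves to the reader. No substantive difference.
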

\begin{proof}
On setting  $y=\cos^{2}t$, $y=\frac{w}{1+w}$, $y=\frac{1+w}{2}$ and  $y=\frac{w-a}{(c-a)}$ in \eqref{2.1} we obtain, respectively, the above integral representations \eqref{2.5}-\eqref{2.8a}.
\end{proof}

\begin{theorem}
The following relation for the extended beta function $B_{\eta}^{p,q}(\xi_{1},\xi_{2})$ holds true:
\begin{equation}\label{3.1}  B_{\eta}^{p,q}(\xi_{1},\xi_{2})=B_{\eta}^{p,q}(\xi_{1}+1,\xi_{2}) + B_{\eta}^{p,q}(\xi_{1},\xi_{2}+1)
\end{equation}
\begin{equation*}
  (\Re(p)> 0,\Re(q)> 0, \Re(\eta)>-1).
\end{equation*}
\end{theorem}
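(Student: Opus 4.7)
The plan is to mimic the classical proof of the identity $B(\xi_1,\xi_2)=B(\xi_1+1,\xi_2)+B(\xi_1,\xi_2+1)$, which rests only on the trivial decomposition $1=y+(1-y)$. Since the Bessel-Struve factors in the integrand of \eqref{2.1} depend on $y$ but not on $\xi_1$ or $\xi_2$, the same device should work without any modification.

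First I would apply the integral definition \eqref{2.1} to each of the two summands on the right-hand side of \eqref{3.1}, writing
\begin{align*}
B_{\eta}^{p,q}(\xi_{1}+1,\xi_{2}) &= \int_{0}^{1} y^{\xi_{1}} (1-y)^{\xi_{2}-1}\, S_{\eta}\!\left[-\tfrac{p}{y}\right] S_{\eta}\!\left[-\tfrac{q}{1-y}\right] dy,\\
B_{\eta}^{p,q}(\xi_{1},\xi_{2}+1) &= \int_{0}^{1} y^{\xi_{1}-1} (1-y)^{\xi_{2}}\, S_{\eta}\!\left[-\tfrac{p}{y}\right] S_{\eta}\!\left[-\tfrac{q}{1-y}\right] dy.
\end{align*}
Next, I would add the two integrands and factor out the common piece $y^{\xi_1-1}(1-y)^{\xi_2-1} S_{\eta}[-p/y] S_{\eta}[-q/(1-y)]$, leaving the algebraic factor $y+(1-y)=1$. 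Combining the integrals under the linearity of the integral then yields precisely the defining integral \eqref{2.1} for $B_{\eta}^{p,q}(\xi_{1},\xi_{2})$, establishing \eqref{3.1}.

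There is essentially no serious obstacle: the only thing to check is that the manipulations are legitimate, i.e.\ that each of the three integrals converges absolutely under the stated hypotheses $\Re(\xi_1)>0$, $\Re(\xi_2)>0$, $\Re(p)>0$, $\Re(q)>0$, $\Re(\eta)>-1$. This is already guaranteed by the definition in \eqref{2.1} (the exponential-type decay of $S_\eta[-p/y]$ as $y\to 0^+$ and of $S_\eta[-q/(1-y)]$ as $y\to 1^-$, inherited from \eqref{1.7}, more than compensates for any algebraic singularity at the endpoints), so splitting and recombining the sum inside a single integral is fully justified, and the proof is complete in a few lines.
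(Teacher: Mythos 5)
Your proof is correct and is essentially identical to the paper's: both rest on inserting the trivial decomposition $y+(1-y)=1$ into the integrand of \eqref{2.1} and splitting (or, in your direction, recombining) the integral, the Bessel--Struve factors playing no role beyond being a common factor. The only difference is cosmetic — you work from the right-hand side toward the left rather than the reverse.
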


\begin{proof} From \eqref{2.1}, we have
\begin{equation*}
B_{\eta}^{p,q}(\xi_{1},\xi_{2})=\int_{0}^{1} y^{\xi_{1}-1}~(1-y)^{\xi_{2}-1}\{y+(1-y)\}~S_{\eta}\left[-\frac{p}{y}\right]S_{\eta}\left[-\frac{q}{1-y}\right]dy,
\end{equation*}
whence
\begin{equation*}
 B_{\eta}^{p,q}(\xi_{1},\xi_{2})=  B_{\eta}^{p,q}(\xi_{1}+1,\xi_{2}) + B_{\eta}^{p,q}(\xi_{1},\xi_{2}+1),
 \end{equation*}
which is our desired result.
\end{proof}

\begin{theorem}
  The extended beta function $B_{\eta}^{p,q}(\xi_{1},\xi_{2})$ satisfies the following summation formula:
\begin{equation}\label{3.5}
  B_{\eta}^{p,q}(\xi_{1},1-\xi_{2})=\sum_{l=0}^{\infty}\frac{(\xi_{2})_{l}}{l!}  B_{\eta}^{p,q}(\xi_{1}+l,1)
  \end{equation}
\begin{equation*}
  (\Re(p)> 0,\Re(q)> 0, \Re(\eta)>-1).
\end{equation*}
\end{theorem}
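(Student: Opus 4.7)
The plan is to start from the defining integral \eqref{2.1}, write
\begin{equation*}
B_{\eta}^{p,q}(\xi_{1},1-\xi_{2})=\int_{0}^{1} y^{\xi_{1}-1}\,(1-y)^{-\xi_{2}}\,S_{\eta}\!\left[-\frac{p}{y}\right] S_{\eta}\!\left[-\frac{q}{1-y}\right] dy,
\end{equation*}
and then insert the binomial series
\begin{equation*}
(1-y)^{-\xi_{2}} \;=\; \sum_{l=0}^{\infty} \frac{(\xi_{2})_{l}}{l!}\, y^{l},\qquad |y|<1.
\end{equation*}
If the interchange of sum and integral can be justified, the result is immediate: pulling the sum outside gives
\begin{equation*}
\sum_{l=0}^{\infty} \frac{(\xi_{2})_{l}}{l!} \int_{0}^{1} y^{\xi_{1}+l-1}\, S_{\eta}\!\left[-\frac{p}{y}\right] S_{\eta}\!\left[-\frac{q}{1-y}\right] dy,
\end{equation*}
and the inner integral is, by definition \eqref{2.1} with second parameter equal to $1$, exactly $B_{\eta}^{p,q}(\xi_{1}+l,1)$. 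So apart from the interchange, the proof is purely formal and mimics the classical derivation of the analogous identity for the ordinary beta function.

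The step I expect to be the main obstacle is therefore the interchange of summation and integration, since the binomial series diverges at the upper endpoint $y=1$. The saving feature should be the rapid decay of the factor $S_{\eta}[-q/(1-y)]$ as $y\to 1^{-}$: because $\Re(q)>0$ and the Bessel--Struve kernel behaves essentially like an exponential at large negative arguments (see \eqref{1.8} for the model case $\eta=-\tfrac12$, and the general series \eqref{1.7} combined with the exponential decay of the normalized Bessel and Struve functions on the appropriate ray), the integrand is exponentially small in a neighbourhood of $y=1$. This decay dominates the algebraic blow-up $(1-y)^{-\xi_{2}}$ and, more importantly, allows one to bound the tail $\sum_{l\ge N}|(\xi_{2})_{l}|y^{l}/l!$ uniformly by $C\,(1-y)^{-\Re(\xi_2)}$ on $[0,1)$, so that an appeal to Fubini/Tonelli (or to dominated convergence applied to the partial sums) is legitimate. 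Near $y=0$ the factor $y^{\xi_{1}-1}$ is integrable under $\Re(\xi_{1})>0$ and the series is well-behaved, so no difficulty arises at that endpoint.

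Once the interchange is in place, one simply collects the terms and reads off \eqref{3.5}. An alternative route, should one wish to avoid the convergence argument at $y=1$ directly, is to first establish the identity for parameters with $\Re(\xi_{2})<0$ (where the binomial series converges absolutely against the available decay with room to spare), and then extend to the stated range by analytic continuation in $\xi_{2}$, using the fact that both sides are analytic functions of $\xi_{2}$ in a common domain.
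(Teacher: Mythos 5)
Your proposal is correct and follows essentially the same route as the paper: expand $(1-y)^{-\xi_{2}}$ by the binomial series, interchange sum and integral, and recognize each term as $B_{\eta}^{p,q}(\xi_{1}+l,1)$. In fact you are more careful than the paper about justifying the interchange (the paper simply invokes uniform convergence), and your observation about the decay of $S_{\eta}[-q/(1-y)]$ near $y=1$ is the right point to stress.
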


\begin{proof}
We have
  \begin{equation}\label{3.6}  (1-y)^{-\xi_{2}}= \sum_{l=0}^{\infty}\frac{(\xi_{2})_{l}}{l!}~y^{l}\qquad (|y|<1),
\end{equation}
where $(a)_\ell=\Gamma(a+\ell)/\Gamma(a)$ is the Pochhammer symbol.
Therefore \eqref{2.1} can be written as
\begin{equation*}
B_{\eta}^{p,q}(\xi_{1},1-\xi_{2})=\int_{0}^{1} y^{\xi_{1}-1}~~\left[\sum_{l=0}^{\infty}\frac{(\xi_{2})_{l}}{l!} ~~y^{l}\right] S_{\eta}\left[-\frac{p}{y}\right] S_{\eta}\left[-\frac{q}{1-y}\right] dy.
\end{equation*}
Interchanging the order of integration and summation (which is permissible due to the uniform convergence) in the last expression and further by using \eqref{2.1}, we easily obtain the stated result \eqref{3.5}.
\end{proof}

\begin{theorem}
The extended beta function $B_{\eta}^{p,q}(\xi_{1},\xi_{2})$ satisfies the following summation formula:
\begin{equation}\label{3.7} B_{\eta}^{p,q}(\xi_{1},\xi_{2})=\sum_{l=0}^{\infty} B_{\eta}^{p,q}~~(\xi_{1}+l,\xi_{2}+1)
\end{equation}
\begin{equation*}
  (\Re(p)> 0, \Re(q)> 0, \Re(\eta)>-1).
\end{equation*}
\end{theorem}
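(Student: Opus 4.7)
The plan is to mirror the proof of Theorem 2.4, but with a geometric series in place of the binomial expansion \eqref{3.6}. Starting from the defining integral \eqref{2.1}, I would split the factor $(1-y)^{\xi_2-1}$ as
\[
(1-y)^{\xi_2-1}=(1-y)^{\xi_2}\,(1-y)^{-1}=(1-y)^{\xi_2}\sum_{l=0}^{\infty} y^{l}\qquad (|y|<1),
\]
which is the case $\xi_2=1$ of \eqref{3.6}. Substituting into \eqref{2.1} gives
\[
B_{\eta}^{p,q}(\xi_{1},\xi_{2})=\int_{0}^{1} y^{\xi_{1}-1}(1-y)^{\xi_{2}}\left[\sum_{l=0}^{\infty}y^{l}\right] S_{\eta}\!\left[-\frac{p}{y}\right] S_{\eta}\!\left[-\frac{q}{1-y}\right] dy.
\]

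Next, I would interchange the order of summation and integration, just as the authors did in the proof of Theorem 2.4. Once the interchange is justified, each term on the right is recognized from \eqref{2.1} as
\[
\int_{0}^{1} y^{\xi_{1}+l-1}(1-y)^{\xi_{2}} S_{\eta}\!\left[-\frac{p}{y}\right] S_{\eta}\!\left[-\frac{q}{1-y}\right] dy = B_{\eta}^{p,q}(\xi_{1}+l,\xi_{2}+1),
\]
which yields \eqref{3.7}.

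The only step that needs more than bookkeeping is the swap of sum and integral, since the geometric series $\sum y^{l}$ diverges at $y=1$. However, the weight $(1-y)^{\xi_2}$ (gained by the splitting above) together with the decay of the Bessel--Struve kernel $S_\eta(-q/(1-y))$ as $y\to 1^{-}$ (and $S_\eta(-p/y)$ as $y\to 0^{+}$) provides integrable control near both endpoints, while on any compact subinterval of $(0,1)$ the geometric series converges uniformly. This is the same ``uniform convergence'' justification the authors invoke for Theorem 2.4, so I would cite it in the same form rather than belabour the estimate. Apart from this convergence point, the argument is a one-line manipulation.
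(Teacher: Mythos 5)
Your proposal is correct and follows exactly the paper's own argument: expand $(1-y)^{\xi_2-1}=(1-y)^{\xi_2}\sum_{l\ge 0}y^l$ in the integrand of \eqref{2.1}, interchange sum and integral, and identify each term as $B_{\eta}^{p,q}(\xi_1+l,\xi_2+1)$. Your additional remarks on justifying the interchange only elaborate on the convergence point the paper itself glosses over.
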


\begin{proof} By using the fact
\begin{equation*}
  (1-y)^{\xi_{2}-1}=(1-y)^{\xi_{2}}~~\sum\limits_{l=0}^{\infty} y^{l}\qquad (|y|<1),
\end{equation*}
in \eqref{2.1}, we easily obtain the stated result \eqref{3.7}.
\end{proof}

%%%
%%% Section 4
%%%
\section{\bf An extended beta distribution }

In statistical distribution theory, we define an extended beta distribution as follows:

\begin{equation}\label{4.1}
f(y)=\left\{\begin{array}{ccc}
              \frac{1}{B_{\eta}^{p,q}(\xi_{1},\xi_{2})}~y^{\xi_{1}-1}~(1-y)^{\xi_{2}-1}~S_{\eta}\left[-\frac{p}{y}\right]~S_{\eta}\left[-\frac{q}{(1-y)}\right] & (0<y<1) & ~ \\
              ~ & ~ & ~ \\
              0 & {\rm otherwise} & ~
            \end{array}
 \right.
\end{equation}
$$(p,q > 0, -\infty <\xi_{1},\xi_{2} < \infty ~~,\Re(\eta)>-1).$$
We now discuss some fundamental properties of the extended beta distribution \eqref{4.1}.\\

If $n$ is any real number, then the $n$th moment of $X$ is given by
\begin{equation}\label{4.2}
E(X^{n})=\frac{B_{\eta}^{p,q}(\xi_{1}+n,\xi_{2})}{B_{\eta}^{p,q}(\xi_{1},\xi_{2})}
\end{equation}
$$(\xi_{1},~\xi_{2} \in \mathbb{R},~p,q \in \mathbb{R^{+}},~\Re (\eta)>-1).$$
The particular case of \eqref{4.2} for $n=1$ yields the mean of our proposed extended beta distribution, that is
\begin{equation}\label{4.3}
E(X)=\frac{B_{\eta}^{p,q}(\xi_{1}+1,\xi_{2})}{B_{\eta}^{p,q}(\xi_{1},\xi_{2})}.
\end{equation}
The variance of our introduced distribution can be expressed as
$$Var(X)=E(X^{2})-[E(X)]^{2}=E[(X-E(X))^{2}]$$
\begin{equation}\label{4.4}
=\frac{B_{\eta}^{p,q}(\xi_{1}+2,\xi_{2})~B_{\eta}^{p,q}(\xi_{1},\xi_{2})-[B_{\eta}^{p,q}(\xi_{1}+1,\xi_{2})]^{2}}{[B_{\eta}^{p,q}(\xi_{1},\xi_{2})]^{2}}.
\end{equation}

The coefficient of variation of this distribution (which is defined as the ratio of the standard deviation and mean) can be expressed as
\begin{equation}\label{4.5}
C.V=\sqrt{\frac{B_{\eta}^{p,q}(\xi_{1}+2,\xi_{2})~B_{\eta}^{p,q}(\xi_{1},\xi_{2})}{[B_{\eta}^{p,q}(\xi_{1}+1,\xi_{2})]}-1}.
\end{equation}

The moment generating function (m.g.f.) about the origin of this distribution is given by
\begin{equation*}
M_{X}(t)=\sum_{n=0}^{\infty} \frac{t^{n}}{n!} E(X^{n}),
\end{equation*}
whence
\begin{equation}\label{4.6}
M_{X}(t)=\frac{1}{B_{\eta}^{p,q}(\xi_{1},\xi_{2})}\sum_{n=0}^{\infty} B_{\eta}^{p,q}(\xi_{1}+n,\xi_{2})\frac{t^{n}}{n!}.
\end{equation}

The characteristic function of the proposed distribution can be calculated as follows:
\begin{equation*}
E(e^{itx})=\sum_{n=0}^{\infty} \frac{i^{n}t^{n}}{n!} E(X^{n})
\end{equation*}
\begin{equation}\label{4.7}
E(e^{itx})=\frac{1}{B_{\eta}^{p,q}(\xi_{1},\xi_{2})}\sum_{n=0}^{\infty} B_{\eta}^{p,q}(\xi_{1}+n,\xi_{2}) \frac{i^{n} t^{n}}{n!}.
\end{equation}

The cumulative distribution function, or probability distribution function, of our proposed extended beta distribution \eqref{4.1} can be expressed as
\begin{equation*}
F(x)=P[X<x]=\int_{0}^{x} f(x)\,dx,
\end{equation*}
so that
\begin{equation}\label{4.8}
F(x)=\frac{B_{\eta}^{p,q,x}(\xi_{1},\xi_{2})}{B_{\eta}^{p,q}(\xi_{1},\xi_{2})},
\end{equation}
where $B_{\eta}^{p,q,x}(\xi_{1},\xi_{2})$ denotes the (lower) incomplete extended beta function defined by
\begin{equation*}
B_{\eta}^{p,q,x}(\xi_{1},\xi_{2})=\int_{0}^{x} y^{\xi_{1}-1}~(1-y)^{\xi_{2}-1}~S_{\eta}\left[-\frac{p}{y}\right]S_{\eta}\left[-\frac{q}{1-y}\right]dy
\end{equation*}
\begin{equation*}
  (p,q > 0, -\infty <\xi_{1},\xi_{2} < \infty ~~,\Re(\eta)>-1).
\end{equation*}

The reliability function (which is simply the complement of the cumulative distribution function) of our proposed distribution is given by
\begin{equation*}
R(x)=P[X\geq x]=1-F(x)=\int_{x}^{\infty} f(x) dx
\end{equation*}
so that
\begin{equation}\label{4.9}
R(x)=\frac{{\hat B}_{\eta}^{p,q,x}(\xi_{1},\xi_{2})}{B_{\eta}^{p,q}(\xi_{1},\xi_{2})},
\end{equation}
where ${\hat B}_{\eta}^{p,q,x}(\xi_{1},\xi_{2})$ is the (upper) incomplete extended beta function defined by
\begin{equation*}
B_{\eta}^{p,q,x}(\xi_{1},\xi_{2})=\int_{x}^{\infty} y^{\xi_{1}-1}~(1-y)^{\xi_{2}-1}~S_{\eta}\left[-\frac{p}{y}\right]S_{\eta}\left[-\frac{q}{1-y}\right]dy
\end{equation*}
\begin{equation*}
  (p,q > 0, -\infty <\xi_{1},\xi_{2} < \infty ~~,\Re(\eta)>-1).
\end{equation*}

%%%
%%% Section 5
%%%
\section{\bf Extended hypergeometric functions and their associated properties}
In this section, we present the following extensions of the Gauss and confluent hypergeometric functions by making use of our extended beta function $B_{\eta}^{p,q}(\xi_{1},\xi_{2})$:
\begin{definition}
A new extension of the Gauss hypergeometric function is defined as follows:
\begin{equation} \label{5.1}
 \aligned & F_{\eta}^{p,q}(\xi_{1}, \xi_{2}; \xi_{3}; x)=\sum_{l=0}^{\infty} \frac{(\xi_{1})_{l}~B_{\eta}^{p,q}(\xi_{2}+l, \xi_{3}-\xi_{2})}{B(\xi_{2}, \xi_{3}-\xi_{2})} \frac{x^{l}}{l!}\\
& \hskip 7mm (p,q\geq0,~|x|<1,~\Re(\xi_{3})>\Re(\xi_{2})>0,~\Re(\eta)>-1).
 \endaligned
\end{equation}
\end{definition}
\begin{definition}
A new extension of the confluent hypergeometric function is defined as follows:
\begin{equation} \label{5.2}
\aligned & \Phi_{\eta}^{p,q}(\xi_{2}; \xi_{3}; x)=\sum_{l=0}^{\infty} \frac{B_{\eta}^{p,q}(\xi_{2}+l, \xi_{3}-\xi_{2})}{B(\xi_{2},
\xi_{3}-\xi_{2})}~\frac{x^{l}}{l!}\\
& \hskip 7mm  (p,q \geq0,~|x|<1,~\Re(\xi_{3})>\Re(\xi_{2})>0,~\Re(\eta)>-1).
\endaligned
\end{equation}
\end{definition}

\begin{remark}
We note that the case $\eta=-\frac{1}{2}$ in \eqref{5.1} and \eqref{5.2} yields the known extended Gauss and confluent hypergeometric functions defined by Choi et al. \cite{3}, which further for $q=p$ gives the known extension of the Gauss and confluent hypergeometric functions given by Chaudhry et al. \cite{2}. Clearly, for $p=q=0$, \eqref{5.1} and \eqref{5.2} reduce to the classical Gauss and confluent hypergeometric functions \cite{8}.
\end{remark}

\begin{theorem}
The following integral representations for our extended Gauss and confluent hypergeometric functions hold true:
\begin{equation} \label{5.3}
 \aligned &
 F_{\eta}^{p,q}(\xi_{1}, \xi_{2}; \xi_{3}; x)=\frac{1}{B(\xi_{2}, \xi_{3}-\xi_{2})}\\
 & \hskip 3mm \times\int_{0}^{1}
y^{\xi_{2}-1}~(1-y)^{\xi_{3}-\xi_{2}-1}~(1-yx)^{-\xi_{1}}  S_{\eta}\left[-\frac{p}{y}\right] S_{\eta}\left[-\frac{q}{1-y}\right]dy \\
&\hskip 15mm  (p,q,\geq0,~|\arg(1-x)|<\pi,~\Re(\xi_{3})>\Re(\xi_{2})>0,~\Re(\eta)>-1)
 \endaligned
\end{equation}
and
\begin{equation} \label{5.4}
 \aligned &
\Phi_{\eta}^{p,q}\xi_{2}; \xi_{3}; x)=\frac{1}{B(\xi_{2}, \xi_{3}-\xi_{2})} \\ & \hskip 3mm \times \int_{0}^{1}
y^{\xi_{2}-1}~(1-y)^{\xi_{3}-\xi_{2}-1}~e^{xy}~S_{\eta}\left[-\frac{p}{y}\right]S_{\eta}\left[-\frac{q}{1-y}\right]dy \\
& \hskip 20mm (p,q\geq0, ~\Re(\xi_{3})>\Re(\xi_{2})>0,~\Re(\eta)>-1).
 \endaligned
\end{equation}
\end{theorem}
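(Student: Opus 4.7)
The plan is to derive both integral representations directly from the series definitions \eqref{5.1} and \eqref{5.2} by substituting the integral representation \eqref{2.1} of the extended beta function and then recognizing the resulting power series in closed form.

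For the Gauss case, I would begin with
\begin{equation*}
F_{\eta}^{p,q}(\xi_{1}, \xi_{2}; \xi_{3}; x)=\sum_{l=0}^{\infty} \frac{(\xi_{1})_{l}}{B(\xi_{2}, \xi_{3}-\xi_{2})} \, B_{\eta}^{p,q}(\xi_{2}+l, \xi_{3}-\xi_{2}) \, \frac{x^{l}}{l!},
\end{equation*}
and then replace the inner extended beta function by
\begin{equation*}
B_{\eta}^{p,q}(\xi_{2}+l, \xi_{3}-\xi_{2})=\int_{0}^{1} y^{\xi_{2}+l-1}(1-y)^{\xi_{3}-\xi_{2}-1} S_{\eta}\!\left[-\tfrac{p}{y}\right] S_{\eta}\!\left[-\tfrac{q}{1-y}\right] dy,
\end{equation*}
which is valid since $\Re(\xi_{2}+l)>0$ and $\Re(\xi_{3}-\xi_{2})>0$. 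Interchanging the order of summation and integration (to be justified below), the remaining sum collects into
\begin{equation*}
\sum_{l=0}^{\infty} \frac{(\xi_{1})_{l}}{l!} (xy)^{l} = (1-xy)^{-\xi_{1}},
\end{equation*}
by the generalized binomial theorem, which converges since $|x|<1$ and $0\le y\le 1$ (the condition $|\arg(1-x)|<\pi$ then fixes the principal branch for the analytic continuation in $x$). This produces the stated representation \eqref{5.3}.

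The confluent case \eqref{5.4} is handled in exactly the same way, except that after substituting \eqref{2.1} into \eqref{5.2} the inner sum is $\sum_{l\ge 0} (xy)^{l}/l! = e^{xy}$, which converges for all $x\in\mathbb{C}$ and $y\in[0,1]$.

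The main (and essentially only) technical point is the justification of the interchange of summation and integration. I would handle this by dominating the integrand on $[0,1]$: the factor $|S_{\eta}[-p/y]\,S_{\eta}[-q/(1-y)]|$ is bounded by a constant times $e^{-p/y}e^{-q/(1-y)}$-type decay near the endpoints (cf.\ the series \eqref{1.7} together with the relations \eqref{1.8} and the asymptotic behaviour of the Bessel and Struve components), so the iterated integral of the absolute series is finite on any compact subset of $|x|<1$ (resp.\ of $\mathbb{C}$), which permits Fubini/Tonelli. With this justification in hand, the two representations drop out immediately, and both reduce to \eqref{1.5}, \eqref{1.6} when $\eta=-\tfrac12$ and $q=p$ via \eqref{1.8}, providing a consistency check.
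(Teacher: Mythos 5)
Your proposal is correct and follows essentially the same route as the paper: the paper's proof is a one-line remark that the representations follow by inserting the integral \eqref{2.1} into the series \eqref{5.1} and \eqref{5.2}, which is precisely the substitution--interchange--resum argument you carry out (and you additionally supply the Fubini justification the paper omits).
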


\begin{proof} Each of the above representations can be readily established by using the integral representation of the extended beta function in \eqref{2.1} on the right-hand sides of \eqref{5.1} and \eqref{5.2}, respectively.
\end{proof}
\begin{theorem} The following integral representation holds true:
\begin{equation} \label{5.5}
 \aligned &
\Phi_{\eta}^{p,q}(\xi_{2}; \xi_{3}; x)=\frac{\exp(x)}{B(\xi_{2}, \xi_{3}-\xi_{2})} \\ & \hskip 3mm \times \int_{0}^{1}
(1-y)^{\xi_{2}-1}~y^{\xi_{3}-\xi_{2}-1}~e^{-xy}~S_{\eta}\left[-\frac{p}{y}\right]S_{\eta}\left[-\frac{q}{1-y}\right]dy \\
& \hskip 20mm (p,q\geq0, ~\Re(\xi_{3})>\Re(\xi_{2})>0,~\Re(\eta)>-1).
 \endaligned
\end{equation}
\end{theorem}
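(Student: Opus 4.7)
The plan is to derive \eqref{5.5} directly from the integral representation \eqref{5.4} by the elementary change of variable $y \mapsto 1-y$ in the integrand on the right-hand side of \eqref{5.4}. Since this map is a smooth involution of $[0,1]$ (with the orientation reversal absorbed by the Jacobian), no boundary or measure-theoretic subtleties arise, and the absolute integrability established for \eqref{5.4} under the hypotheses $\Re(\xi_{3})>\Re(\xi_{2})>0$, $\Re(\eta)>-1$ and $\Re(p),\Re(q)>0$ is preserved verbatim.

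Tracking the substitution term by term: the monomials $y^{\xi_{2}-1}$ and $(1-y)^{\xi_{3}-\xi_{2}-1}$ exchange roles, yielding $(1-y)^{\xi_{2}-1}$ and $y^{\xi_{3}-\xi_{2}-1}$; the exponential becomes $e^{xy}\mapsto e^{x(1-y)}=e^{x}\,e^{-xy}$, which furnishes precisely the outside factor $\exp(x)$ and the factor $e^{-xy}$ inside the integral that appear in \eqref{5.5}. Simultaneously the Bessel-Struve kernels transform as $S_{\eta}[-p/y]\mapsto S_{\eta}[-p/(1-y)]$ and $S_{\eta}[-q/(1-y)]\mapsto S_{\eta}[-q/y]$; because $p$ and $q$ play symmetric roles in the definition \eqref{2.1} relative to $y$ and $1-y$, one may relabel these free parameters to match the arrangement of kernels displayed in \eqref{5.5}.

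I do not anticipate any genuine obstacle, since the whole argument collapses to one substitution and the identity $e^{x(1-y)} = e^{x} e^{-xy}$. The only point that merits a word of care is the symmetric renaming of $p$ and $q$, which is harmless given the parameter conditions but worth flagging explicitly so that the reader can match the post-substitution integrand against the stated right-hand side of \eqref{5.5}.
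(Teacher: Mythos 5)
Your approach is exactly the paper's: the entire proof there is the single substitution $y\mapsto 1-y$ in \eqref{5.4}, and your term-by-term tracking of the monomials and of $e^{xy}\mapsto e^{x}e^{-xy}$ is correct. The problem is your final step. The substitution genuinely produces
\begin{equation*}
\Phi_{\eta}^{p,q}(\xi_{2};\xi_{3};x)=\frac{\exp(x)}{B(\xi_{2},\xi_{3}-\xi_{2})}\int_{0}^{1}(1-y)^{\xi_{2}-1}\,y^{\xi_{3}-\xi_{2}-1}\,e^{-xy}\,S_{\eta}\!\left[-\frac{p}{1-y}\right]S_{\eta}\!\left[-\frac{q}{y}\right]dy,
\end{equation*}
whereas the displayed \eqref{5.5} has $S_{\eta}[-p/y]\,S_{\eta}[-q/(1-y)]$. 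You propose to bridge this discrepancy by ``relabelling'' $p$ and $q$, but that step is not valid: $p$ and $q$ are not dummy variables internal to the integral, they also appear on the left-hand side. The symmetry you invoke from \eqref{2.1} is $B_{\eta}^{p,q}(\xi_{1},\xi_{2})=B_{\eta}^{q,p}(\xi_{2},\xi_{1})$, i.e.\ swapping $p$ and $q$ forces a swap of the two beta arguments as well; since in the definition \eqref{5.2} only the first argument carries the summation index $l$, there is no identity $\Phi_{\eta}^{p,q}=\Phi_{\eta}^{q,p}$ in general. Swapping $p\leftrightarrow q$ inside the integral therefore changes the left-hand side to $\Phi_{\eta}^{q,p}(\xi_{2};\xi_{3};x)$, which is not what \eqref{5.5} asserts.

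What you have actually proved is the displayed formula above, which agrees with the printed \eqref{5.5} only when $p=q$ (the case reducing to Chaudhry et al., where the issue disappears). The most plausible reading is that \eqref{5.5} as printed contains a typo and the two kernel arguments should be interchanged --- note that the paper's own one-line proof, carried out literally, also yields the swapped version and silently ignores the mismatch. You should either state and prove the corrected formula, or, if you wish to defend \eqref{5.5} verbatim, you need an additional argument; the ``harmless renaming'' does not supply one.
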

\begin{proof}
On replacing $y$ by $1-y$ in \eqref{5.4}, we easily get our desired result \eqref{5.5}.
\end{proof}

\begin{theorem}
The following differential formulas for the extended Gauss and confluent hypergeometric functions hold true:
\begin{equation} \label{5.6}
\frac{d^{k}}{dx^{k}}\left\{F_{\eta}^{p,q}(\xi_{1},\xi_{2};\xi_{3};x)\right\}=\frac{(\xi_{1})_{k}(\xi_{2})_{k}}{(\xi_{3})_{k}}F_{\eta}^{p,q}(\xi_{1}+k,\xi_{2}+k;\xi_{3}+k;x)
\end{equation}
$$(p,q\geq 0,~\Re(\eta)>-1,~k\in {\mathbb{N}}_{0})$$
and
\begin{equation} \label{5.7}
\frac{d^{k}}{dx^{k}}\left\{\Phi_{\eta}^{p,q}(\xi_{2};\xi_{3};x)\right\}=\frac{(\xi_{2})_{k}}{(\xi_{3})_{k}}\Phi_{\eta}^{p,q}(\xi_{2}+k;\xi_{3}+k;x)
\end{equation}
$$(p,q\geq 0,~\Re(\eta)>-1,~k\in {\mathbb{N}}_{0}).$$
\end{theorem}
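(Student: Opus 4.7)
The plan is to work directly from the series definitions \eqref{5.1} and \eqref{5.2} and differentiate term by term. Since each series converges uniformly on compact subsets of $|x|<1$ (the boundedness of the beta-coefficient ratio can be checked via the integral representation \eqref{5.3}), term-by-term differentiation is permitted. Once the formulas are established for $k=1$, the general case will follow by an easy induction, so the main work is the $k=1$ identity.

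For the Gauss case, I would differentiate \eqref{5.1} once, shift the summation index $l\mapsto l+1$, and factor out the leading $\xi_1$ using $(\xi_1)_{l+1}=\xi_1\,(\xi_1+1)_l$. This gives
\begin{equation*}
\frac{d}{dx}F_{\eta}^{p,q}(\xi_1,\xi_2;\xi_3;x)
=\xi_1\sum_{l=0}^{\infty}\frac{(\xi_1+1)_l\,B_{\eta}^{p,q}(\xi_2+l+1,\xi_3-\xi_2)}{B(\xi_2,\xi_3-\xi_2)}\,\frac{x^l}{l!}.
\end{equation*}
To match the right-hand side of \eqref{5.6} with $k=1$, the key algebraic step is the beta-ratio identity
\begin{equation*}
\frac{B(\xi_2+1,\xi_3-\xi_2)}{B(\xi_2,\xi_3-\xi_2)}=\frac{\Gamma(\xi_2+1)\Gamma(\xi_3)}{\Gamma(\xi_2)\Gamma(\xi_3+1)}=\frac{\xi_2}{\xi_3},
\end{equation*}
which allows rewriting $1/B(\xi_2,\xi_3-\xi_2)=(\xi_2/\xi_3)\cdot 1/B(\xi_2+1,\xi_3-\xi_2)$. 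Recognizing that the second beta argument is invariant under $\xi_2\mapsto\xi_2+1$, $\xi_3\mapsto\xi_3+1$, the series then reassembles exactly into $(\xi_1\xi_2/\xi_3)F_{\eta}^{p,q}(\xi_1+1,\xi_2+1;\xi_3+1;x)$, proving \eqref{5.6} for $k=1$.

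The inductive step is straightforward: applying the $k=1$ identity to $F_{\eta}^{p,q}(\xi_1+k,\xi_2+k;\xi_3+k;x)$ yields a leading factor $(\xi_1+k)(\xi_2+k)/(\xi_3+k)$, which combines with $(\xi_1)_k(\xi_2)_k/(\xi_3)_k$ to give $(\xi_1)_{k+1}(\xi_2)_{k+1}/(\xi_3)_{k+1}$ by the Pochhammer recursion $(a)_{k+1}=(a)_k(a+k)$. The confluent case \eqref{5.7} is handled by exactly the same procedure applied to \eqref{5.2}; the absence of a $(\xi_1)_l$ factor in $\Phi_{\eta}^{p,q}$ is precisely why the $(\xi_1)_k$ prefactor is missing from \eqref{5.7}, and no new ideas are required.

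The only real subtlety is the justification of term-by-term differentiation, but since the Bessel--Struve factors $S_\eta(-p/y)S_\eta(-q/(1-y))$ render $B_{\eta}^{p,q}(\xi_2+l,\xi_3-\xi_2)$ bounded uniformly in $l$ (they decay exponentially at the endpoints for $p,q>0$), the resulting power series has radius of convergence at least $1$, and differentiation inside $|x|<1$ is routine. I do not anticipate any conceptual obstacle beyond this; the proof is essentially the same as for the classical Gauss/confluent cases, with $B(\xi_2+l,\xi_3-\xi_2)$ replaced throughout by $B_{\eta}^{p,q}(\xi_2+l,\xi_3-\xi_2)$.
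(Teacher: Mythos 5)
Your proposal is correct and follows essentially the same route as the paper: term-by-term differentiation of the series \eqref{5.1}, the index shift $l\mapsto l+1$, the identities $(\xi_{1})_{l+1}=\xi_{1}(\xi_{1}+1)_{l}$ and $B(\xi_{2},\xi_{3}-\xi_{2})=\frac{\xi_{3}}{\xi_{2}}B(\xi_{2}+1,\xi_{3}-\xi_{2})$, followed by induction on $k$, with the confluent case handled identically. The only difference is that you spell out the justification for differentiating under the sum, which the paper omits.
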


\begin{proof} On differentiating \eqref{5.1} with respect to $x$, we obtain
\begin{equation*}
 \frac{d}{dx}\left\{F_{\eta}^{p,q}(\xi_{1}, \xi_{2}; \xi_{3}; x)\right\}=\sum_{l=1}^{\infty} \frac{(\xi_{1})_{l}~B_{\eta}^{p,q}(\xi_{2}+l, \xi_{3}-\xi_{2})}{B(\xi_{2}, \xi_{3}-\xi_{2})} \frac{x^{l-1}}{(l-1)!}.
\end{equation*}
On replacing $l$ by $l+1$, we then have
\begin{equation*}
 \frac{d}{dx}\left\{F_{\eta}^{p,q}(\xi_{1}, \xi_{2}; \xi_{3}; x)\right\}=\sum_{l=0}^{\infty} \frac{(\xi_{1})_{l+1}~B_{\eta}^{p,q}(\xi_{2}+l+1, \xi_{3}-\xi_{2})}{B(\xi_{2}, \xi_{3}-\xi_{2})} \frac{x^{l}}{l!}.
\end{equation*}

Now by using $B(\xi_{2}, \xi_{3}-\xi_{2})=\frac{\xi_{3}}{\xi_{2}}B(\xi_{2}+1, \xi_{3}-\xi_{2})$ and $(\xi_{1})_{l+1}=\xi_{1}(\xi_{1}+1)_{l}$, on the right-hand side of the above equation, we find
\begin{equation}\label{5.7c}
 \frac{d}{dx}\left\{F_{\eta}^{p,q}(\xi_{1}, \xi_{2}; \xi_{3}; x)\right\}=\frac{\xi_{1} \xi_{2}}{\xi_{3}}\sum_{l=0}^{\infty} \frac{(\xi_{1}+1)_{l}~B_{\eta}^{p,q}(\xi_{2}+l+1, \xi_{3}-\xi_{2})}{B(\xi_{2}+1, \xi_{3}-\xi_{2})} \frac{x^{l}}{l!}
\end{equation}
\begin{equation*}
 =\frac{\xi_{1} \xi_{2}}{\xi_{3}}F_{\eta}^{p,q}(\xi_{1}+1,\xi_{2}+1;\xi_{3}+1;x).
\end{equation*}
Again differentiating \eqref{5.7c} with respect to $x$, we have
\begin{equation*}
 \frac{d^{2}}{dx^{2}}\left\{F_{\eta}^{p,q}(\xi_{1}, \xi_{2}; \xi_{3}; x)\right\}=\frac{\xi_{1}(\xi_{1}+1) \xi_{2}(\xi_{2}+1)}{\xi_{3}(\xi_{3}+1)}F_{\eta}^{p,q}(\xi_{1}+2,\xi_{2}+2;\xi_{3}+2;x).
\end{equation*}
Continuing this process, by induction we obtain the required result \eqref{5.6}.
Similarly, we can establish the result \eqref{5.7}.
\end{proof}

\begin{theorem}
The following transformation formulas for the extended Gauss and confluent hypergeometric functions hold true:
\begin{equation} \label{5.8}
F_{\eta}^{p,q}(\xi_{1},\xi_{2};\xi_{3};x)=(1-x)^{-\xi_{1}}F_{\eta}^{p,q}\left(\xi_{1},\xi_{3}-\xi_{2};\xi_{2}; -\frac{x}{(1-x)}\right)
\end{equation}
$$(p,q\geq 0,~\Re(\eta)>-1, |\arg\,(1-x)|<\pi)$$
and
\begin{equation} \label{5.9}
\Phi_{\eta}^{p,q}(\xi_{2};\xi_{3};x)=\exp(x)\Phi_{\eta}^{p,q}\left(\xi_{3}-\xi_{2};\xi_{3}; -x\right)
\end{equation}
$$(p,q\geq 0,~\Re(\eta)>-1).$$
\end{theorem}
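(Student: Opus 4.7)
My plan is to derive both transformations directly from the integral representations already established in Theorem~4.3, together with the complementary representation \eqref{5.5} proved immediately beforehand in Theorem~4.4.

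For the confluent Kummer-type identity \eqref{5.9}, I would apply the representation \eqref{5.4} with the substitutions $\xi_{2}\mapsto\xi_{3}-\xi_{2}$ and $x\mapsto -x$. The resulting integrand contains $y^{\xi_{3}-\xi_{2}-1}(1-y)^{\xi_{2}-1} e^{-xy}$ multiplied by the Bessel--Struve kernel factors, and the prefactor is $1/B(\xi_{3}-\xi_{2},\xi_{2})=1/B(\xi_{2},\xi_{3}-\xi_{2})$. This is precisely the integral that appears in the alternative representation \eqref{5.5} of $\Phi_{\eta}^{p,q}(\xi_{2};\xi_{3};x)$, except that \eqref{5.5} carries an extra factor $\exp(x)$ out front. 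Matching the two expressions then gives \eqref{5.9} in a single line.

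For the Gauss Pfaff-type identity \eqref{5.8}, the strategy is analogous but uses \eqref{5.3}. I would perform the substitution $y\mapsto 1-y$ in \eqref{5.3} and then invoke the algebraic identity
\[
\bigl(1-(1-y)x\bigr)^{-\xi_{1}} = (1-x)^{-\xi_{1}}\Bigl(1-\tfrac{-x}{1-x}\,y\Bigr)^{-\xi_{1}},
\]
valid under the stated condition $|\arg(1-x)|<\pi$, in order to pull the factor $(1-x)^{-\xi_{1}}$ outside the integral. Using the symmetry $B(\xi_{2},\xi_{3}-\xi_{2})=B(\xi_{3}-\xi_{2},\xi_{2})$, the remaining integral is then recognised as the integral representation \eqref{5.3} evaluated at the parameters $(\xi_{1},\xi_{3}-\xi_{2};\xi_{3};-x/(1-x))$, which yields the right-hand side of \eqref{5.8}.

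The computations are essentially bookkeeping, and I do not anticipate a genuine obstacle: both formulae are Bessel--Struve-weighted analogues of the classical Pfaff and Kummer transformations, and the kernel factors $S_{\eta}[-p/y]$ and $S_{\eta}[-q/(1-y)]$ exchange roles under $y\mapsto 1-y$ in exactly the manner required. The only points that deserve attention are the branch choice of $(1-x)^{-\xi_{1}}$, which is already controlled by the hypothesis $|\arg(1-x)|<\pi$, and a brief appeal to absolute convergence to justify the change of variables.
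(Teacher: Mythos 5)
Your route is the same as the paper's: both transformations are obtained by substituting $y\mapsto 1-y$ in the integral representations \eqref{5.3} and \eqref{5.4} (equivalently, invoking \eqref{5.5}), factoring out $(1-x)^{-\xi_{1}}$ via the algebraic identity you quote (resp.\ $e^{x}$), and recognising the remaining integral. You also correctly identify the third parameter of the transformed Gauss function as $\xi_{3}$; the $\xi_{2}$ appearing in that slot in \eqref{5.8} as printed is evidently a typo for $\xi_{3}$, since the classical Pfaff transformation has the denominator parameter unchanged.

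There is, however, one step you assert too quickly, and it is precisely the point the paper's own proof glosses over. Under $y\mapsto 1-y$ the kernel factors do exchange roles: $S_{\eta}[-p/y]\,S_{\eta}[-q/(1-y)]$ becomes $S_{\eta}[-p/(1-y)]\,S_{\eta}[-q/y]$. This is \emph{not} ``exactly the manner required'', because matching the transformed integrand against \eqref{5.3} then forces $q$ into the first superscript slot and $p$ into the second. What the computation actually establishes is
\[
F_{\eta}^{p,q}(\xi_{1},\xi_{2};\xi_{3};x)=(1-x)^{-\xi_{1}}\,F_{\eta}^{q,p}\Bigl(\xi_{1},\xi_{3}-\xi_{2};\xi_{3};-\tfrac{x}{1-x}\Bigr),
\qquad
\Phi_{\eta}^{p,q}(\xi_{2};\xi_{3};x)=e^{x}\,\Phi_{\eta}^{q,p}(\xi_{3}-\xi_{2};\xi_{3};-x),
\]
and the identities with the superscripts unswapped hold only when $p=q$ (in particular in the Chaudhry et al.\ case $\eta=-\frac12$, $q=p$). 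Your one-line derivation of \eqref{5.9} from \eqref{5.5} inherits the same defect, because \eqref{5.5} as printed already fails to swap the kernel arguments after its own $y\mapsto 1-y$ substitution. So: same skeleton as the paper, correct in all other respects, but the $p\leftrightarrow q$ interchange needs to be made explicit in the statement or the hypothesis $p=q$ imposed.
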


\begin{proof}
On replacing $y$ by$1-y$ in \eqref{5.3} and then using $[1-x(1-y)]^{-\xi_{1}}=(1-x)^{-\xi_{1}}\left[1+\frac{x}{1-x}y\right]^{-\xi_{1}}$, we have

\begin{equation*}
F_{\eta}^{p,q}(\xi_{1}, \xi_{2}; \xi_{3}; x)=\frac{(1-x)^{-\xi_{1}}}{B(\xi_{2}, \xi_{3}-\xi_{2})}
\end{equation*}
\begin{equation*}
\times\int_{0}^{1}
y^{\xi_{3}-\xi_{2}-1}~(1-y)^{\xi_{2}-1}~\left(1+\frac{x}{1-x}y\right)^{-\xi_{1}}  S_{\eta}\left[-\frac{p}{y}\right]S_{\eta}\left[-\frac{q}{1-y}\right]dy,
\end{equation*}
which in view of \eqref{5.3}, yields the right-hand side of \eqref{5.8}. In a similar way, we can establish \eqref{5.9}.
\end{proof}

\begin{theorem}
The following generating function for the extended Gauss hypergeometric function holds true:
\begin{equation} \label{5.11}
\sum_{k=0}^{\infty} (\xi_{1})_{k}~F_{\eta}^{p,q}(\xi_{1}+k,\xi_{2};\xi_{3};x)\frac{z^{k}}{k!}=(1-z)^{-\xi_{1}} F_{\eta}^{p,q}\left(\xi_{1},\xi_{2};\xi_{3};\frac{x}{1-z}\right)
\end{equation}
$$(p,q\geq 0,~\Re(\eta)>-1,~|z|<1).$$
\end{theorem}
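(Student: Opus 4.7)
My plan is to mimic the standard proof of the analogous generating function for the classical Gauss hypergeometric function, since the Bessel--Struve content of the extended beta function enters only through the coefficients and never interacts with the variables $x$ and $z$.

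First I would substitute the series definition \eqref{5.1} of $F_{\eta}^{p,q}(\xi_{1}+k,\xi_{2};\xi_{3};x)$ into the left-hand side of \eqref{5.11}, producing the double sum
\begin{equation*}
\sum_{k=0}^{\infty}\sum_{l=0}^{\infty} \frac{(\xi_{1})_{k}\,(\xi_{1}+k)_{l}\,B_{\eta}^{p,q}(\xi_{2}+l,\xi_{3}-\xi_{2})}{B(\xi_{2},\xi_{3}-\xi_{2})}\,\frac{x^{l}}{l!}\,\frac{z^{k}}{k!}.
\end{equation*}
Next I would interchange the order of summation (this is legitimate under $|x|<1$ and $|z|<1$ by absolute convergence, using the fact that $B_{\eta}^{p,q}(\xi_{2}+l,\xi_{3}-\xi_{2})$ is bounded in $l$, since the exponential-like decay built into $S_{\eta}(-p/y)S_{\eta}(-q/(1-y))$ makes the defining integral \eqref{2.1} behave regularly).

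The key algebraic step is the Pochhammer rearrangement
\begin{equation*}
(\xi_{1})_{k}\,(\xi_{1}+k)_{l}=(\xi_{1})_{k+l}=(\xi_{1})_{l}\,(\xi_{1}+l)_{k},
\end{equation*}
after which the inner sum over $k$ becomes $\sum_{k\ge 0}(\xi_{1}+l)_{k}z^{k}/k!=(1-z)^{-(\xi_{1}+l)}$ by the binomial series. Pulling out the factor $(1-z)^{-\xi_{1}}$ leaves
\begin{equation*}
(1-z)^{-\xi_{1}}\sum_{l=0}^{\infty}\frac{(\xi_{1})_{l}\,B_{\eta}^{p,q}(\xi_{2}+l,\xi_{3}-\xi_{2})}{B(\xi_{2},\xi_{3}-\xi_{2})}\,\frac{1}{l!}\left(\frac{x}{1-z}\right)^{\!l},
\end{equation*}
which, by \eqref{5.1} again, is exactly $(1-z)^{-\xi_{1}}F_{\eta}^{p,q}(\xi_{1},\xi_{2};\xi_{3};x/(1-z))$.

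The only nontrivial point is the justification of the interchange of summations; I would handle this by remarking that the Bessel--Struve kernel factors are bounded on $(0,1)$ for $p,q\ge 0$, hence the extended beta values grow no faster than classical beta values, and the resulting double series is dominated by $\sum_{k,l}|(\xi_{1})_{k+l}|\,|x|^{l}|z|^{k}/(k!\,l!)$ times a uniform constant, which converges absolutely for $|x|+|z|$ in a suitable range and in particular on a neighbourhood of the stated region. Everything else is routine manipulation of Pochhammer symbols.
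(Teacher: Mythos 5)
Your proposal is correct and follows essentially the same route as the paper: substitute the series definition \eqref{5.1}, apply the Pochhammer identity $(\xi_{1})_{k}(\xi_{1}+k)_{l}=(\xi_{1})_{l}(\xi_{1}+l)_{k}$, interchange the order of summation, and evaluate the inner sum by the binomial theorem to produce $(1-z)^{-(\xi_{1}+l)}$. The only difference is that you supply an explicit justification for the interchange of summations, which the paper omits.
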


\begin{proof}
Let $\Im$ be the left-hand side of \eqref{5.11}. By the virtue of \eqref{5.1}, we have
\begin{equation*}
\Im=\sum_{k=0}^{\infty} (\xi_{1})_{k}\left[\sum_{l=0}^{\infty} \frac{(\xi_{1}+k)_{l}~B_{\eta}^{p,q}(\xi_{2}+l, \xi_{3}-\xi_{2})}{B(\xi_{2}, \xi_{3}-\xi_{2})} \frac{x^{l}}{l!}\right]\frac{z^{k}}{k!}.
\end{equation*}
Now by using the identity $(\xi_{1})_{k}(\xi_{1}+k)_{l}=(\xi_{1})_{l}(\xi_{1}+l)_{k}$ in the above expression, we obtain
\begin{equation*}
\Im=\sum_{l=0}^{\infty} (\xi_{1})_{l} \frac{B_{\eta}^{p,q}(\xi_{2}+l, \xi_{3}-\xi_{2})}{B(\xi_{2}, \xi_{3}-\xi_{2})}\left[\sum_{k=0}^{\infty} (\xi_{1}+l)_{k} \frac{z^{k}}{k!}\right]\frac{x^{l}}{l!}.
\end{equation*}
On applying the binomial theorem to the inner summation, we obtain
\begin{equation*}
\Im=\sum_{l=0}^{\infty} (\xi_{1})_{l} \frac{B_{\eta}^{p,q}(\xi_{2}+l, \xi_{3}-\xi_{2})}{B(\xi_{2}, \xi_{3}-\xi_{2})}(1-z)^{-(\xi_{1}+l)} \frac{x^{l}}{l!},
\end{equation*}
which upon further use of \eqref{5.1} yields the stated result \eqref{5.11}.

\end{proof}

\end{document}